
\documentclass[a4paper,twoside]{amsart}

\usepackage{amssymb,amsfonts,amsmath}
\usepackage{latexsym}
\usepackage[cp1251]{inputenc}
\usepackage[russian,english]{babel}

\newcommand{\diam}{{\mathrm{diam}}}

\newcommand{\radius}{{\mathrm{rad}}}

\newtheorem{theo}{Theorem}[section]
\newtheorem{prop}[theo]{Proposition}
\newtheorem{cor}[theo]{Corollary}

\newcommand{\Trans} {{\mathrm{Trans}}}

\newcommand{\Prox}{{\mathrm{Prox}}}

\newcommand{\Eq}{{\mathrm{Eq}}}
\newcommand{\Equ}{{\mathrm{Equ}}}
\newcommand{\ToM}{{\mathrm{ToM}}}

\makeatletter
\@namedef{subjclassname@2010}{%
  \textup{2010} Mathematics Subject Classification}
\makeatother

\sloppy



\begin{document}

\title{On the Lyapunov numbers}

\author[S. Kolyada]{Sergi\v{\i} Kolyada}
\address{Institute of Mathematics, NASU \\ Tereshchenkivs'ka 3 \\ 01601 Kyiv, Ukraine}
\email{skolyada@imath.kiev.ua}

\author[O. Rybak]{Oleksandr Rybak}
\address{Institute of Mathematics, NASU \\ Tereshchenkivs'ka 3 \\ 01601 Kyiv, Ukraine}
\email{semperfi@ukr.net}


\begin{abstract}
We introduce and study the Lyapunov numbers -- quantitative measures of the sensitivity of a dynamical system $(X,f)$ given by a compact metric space $X$ and a continuous map $f:X \to X$.  In particular, we prove that for a minimal topologically weakly mixing system all Lyapunov numbers are the same.
\end{abstract}

\subjclass[2010]{Primary 37B05; Secondary 54H20, 37B25}

\keywords{Lyapunov numbers, minimal map, sensitive dynamical system, topologically weakly mixing map}

\maketitle

\section{Introduction}

Throughout this paper $(X,f)$ denotes a \textit{topological dynamical system}, where
$X$ is a compact metric space with metric $d$ and $f:X \to X$ is a continuous map.

The notion of sensitivity (sensitive dependence on initial conditions) was first used by Ruelle \cite{Ru}. According to  the works by Guckenheimer \cite{Gu}, Auslander and Yorke \cite{AuY} a dynamical system $(X,f)$ is called
{\it sensitive} if there exists a positive $\varepsilon$ such that for every $x\in X$ and every
neighborhood $U_x$ of $x$, there exist $y\in U_x$ and a nonnegative integer $n$ with $d(f^n(x),f^n(y))> \varepsilon$.

Recently several authors studied the different properties related
to sensitivity (cf. Abraham et al.\cite{ABC}, Akin and Kolyada
\cite{AK}, Moothathu \cite{Moo}, Huang et al. \cite{HLY}). The
following proposition holds according to \cite{AK}.

\begin{prop} Let $(X,f)$ be a topological dynamical system. The following conditions are
equivalent.
\begin{itemize}
\item[1.] $(X,f)$ is sensitive.
\item[2.] There exists a positive $\varepsilon$ such that for every $x\in X$ and every
neighborhood $U_x$ of $x$, there exists $y\in U_x$ with $\limsup_{n\to \infty}d(f^n(x),f^n(y))> \varepsilon$.
\item[3.] There exists a positive $\varepsilon$ such that in any opene\footnote{Because we so often have to refer to open, nonempty subsets, we will call such subsets \emph{opene}.}
$U$ in $X$ there are $x, y\in U$ and a nonnegative integer $n$ with $d(f^n(x),f^n(y))> \varepsilon$.
\item[4.] There exists a positive $\varepsilon$ such that in any opene $U\subset X$ there are $x,y\in U$ with $\limsup_{n\to \infty}d(f^n(x),f^n(y))> \varepsilon$.
\end{itemize}
\end{prop}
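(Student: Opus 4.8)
The plan is to prove the four conditions equivalent by establishing the cycle of implications
$1 \Rightarrow 3 \Rightarrow 4 \Rightarrow 2 \Rightarrow 1$, with the genuinely substantive work occurring in $3 \Rightarrow 4$ (upgrading a single time $n$ to an unbounded time via a limit superior). The implications $2 \Rightarrow 1$ and $4 \Rightarrow 3$ are immediate from the definitions: a $\limsup > \ep$ forces a single instant $n$ with $d(f^n(x),f^n(y)) > \ep$, and condition $2$ (respectively $4$) is literally condition $1$ (respectively $3$) with the existence of a single $n$ weakened, so these go the trivial direction. The implication $1 \Rightarrow 3$ is also easy: given the sensitivity constant $\ep$ and an opene $U$, pick any $x \in U$; then $U$ is a neighborhood of $x$, so there are $y \in U$ and $n \geq 0$ with $d(f^n(x), f^n(y)) > \ep$, which is exactly condition $3$ with the same $\ep$.

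The heart of the argument is $3 \Rightarrow 4$. I would use the following asymptotic/Baire-category trick, which is the standard way to turn "separation at some time" into "separation in the limit superior." Fix the constant $\ep$ from condition $3$ and let $\delta = \ep/2$ (or any fixed fraction). Let $U$ be an arbitrary opene subset of $X$. The key observation is that applying condition $3$ not to $U$ itself but to a carefully chosen sequence of smaller and smaller opene sets inside $U$ — together with a diagonal/intersection argument — produces points that get $\delta$-separated at arbitrarily large times. Concretely, for each $N$ I want to find points $x_N, y_N$ in a shrinking nested family of opene subsets of $U$ and a time $n_N \geq N$ with $d(f^{n_N}(x_N), f^{n_N}(y_N)) > \ep$; passing to a point in the intersection of the closures of this nested family, continuity of $f^{n_N}$ gives a single pair $x, y \in \overline U$ (in fact in $U$, by taking the nested sets with closures inside $U$) with $d(f^{n_N}(x), f^{n_N}(y)) \geq \ep/2$ for infinitely many $n_N \to \infty$, hence $\limsup_n d(f^n(x), f^n(y)) \geq \ep/2 > 0$, which after renaming the constant is condition $4$. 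The mechanism for producing the shrinking family: given an opene $V \subseteq U$ and a target time-threshold $N$, use condition $3$ to get $p, q \in V$ and some $n$ with $d(f^n(p), f^n(q)) > \ep$; by continuity there are opene neighborhoods $V_p \ni p$, $V_q \ni q$ inside $V$ with $d(f^n(p'), f^n(q')) > \ep$ for all $p' \in V_p$, $q' \in V_q$. If $n \geq N$ we are done for this stage; if not, we still have freedom to iterate. The subtlety — and the main obstacle — is that condition $3$ gives no control on $n$, so $n < N$ may persist; this is circumvented by the observation that a nonempty open set in a system where $f$ is not eventually constant contains points whose orbit-separations can be forced at large times, or more robustly by the following: if for some opene $V \subseteq U$ \emph{every} pair of points and \emph{every} $n \geq N$ had $d(f^n(p), f^n(q)) \leq \ep$, then the forward images $f^n(V)$ for $n \geq N$ would all have diameter $\leq \ep$, and one derives a contradiction with condition $3$ applied to any opene subset of some $f^n(V)$ (using surjectivity of $f$ onto the limit set, or simply applying condition $3$ to a small ball inside $f^N(V)$ and pulling back) — so in fact for every opene $V$ there is a pair separated at some time $\geq N$. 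I would isolate this as the crux lemma and prove it by this diameter-contradiction.

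Assembling these pieces: by the crux lemma, starting from $V_0 = U$ we inductively build opene sets $\overline{V_{k+1}} \subseteq V_k$ and times $n_k \to \infty$ such that $d(f^{n_k}(p), f^{n_k}(q)) > \ep$ for all $p, q \in V_{k+1}$, choosing at each step $n_{k+1} > n_k$ and $\diam V_{k+1} < 1/(k+1)$. By compactness $\bigcap_k \overline{V_k}$ is a single point $x \in U$, and actually for any $y \in \overline{V_1} \subseteq U$ distinct choices are not even needed — but to get two points in $U$ I instead split at the first stage into two disjoint opene pieces $W_x, W_y \subseteq U$ (possible since each $V_{k+1}$ produced by the lemma is obtained by shrinking around \emph{two} distinct points $p \neq q$, so take $W_x$ a neighborhood of $p$ and $W_y$ of $q$ and continue the construction inside $W_x \times W_y$ as a construction on pairs), ending with $x \in W_x$, $y \in W_y$, $x \neq y$, and $d(f^{n_k}(x), f^{n_k}(y)) \geq \ep$ for all $k$, whence $\limsup_{n \to \infty} d(f^n(x), f^n(y)) \geq \ep$. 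This is condition $4$ with constant $\ep$ (or $\ep/2$ to be safe against boundary issues), completing the cycle. I expect the bookkeeping of the "construction on pairs" (working in $X \times X$ with the product structure while keeping both coordinates inside $U$) to be the only fiddly part; the conceptual content is entirely in the crux lemma that separation can always be postponed to arbitrarily large times.
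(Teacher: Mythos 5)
Your cycle $1\Rightarrow3\Rightarrow4\Rightarrow2\Rightarrow1$ is never closed: the implication it actually requires, $4\Rightarrow2$, is nowhere addressed (you instead list $4\Rightarrow3$ among the trivial ones, but that is not part of your cycle), so with the implications you do treat ($1\Rightarrow3$, $3\Rightarrow4$, $4\Rightarrow3$, $2\Rightarrow1$) condition 2 is never derived from anything. Moreover $4\Rightarrow2$ is not purely formal: conditions 1 and 2 demand separation from the \emph{prescribed} centre $x$ of an arbitrary neighborhood, whereas 3 and 4 only provide some pair inside an opene set; to pass from the latter to the former you must pay a triangle inequality and a factor $2$ (if $x',y'\in U_x$ satisfy $\limsup_n d(f^n(x'),f^n(y'))>\varepsilon$, then one of $x',y'$ has $\limsup$-distance $>\varepsilon/2$ from $x$). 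This is harmless for the proposition, since each condition only asserts the existence of some positive constant, but the step has to be made.

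The second problem is your proof of the crux lemma. The lemma itself (for every opene $V$ and every $N$ there is a pair in $V$ separated by more than $\varepsilon$ at some time $\ge N$) is true, but your justification is broken: $f^N(V)$ is only a continuous image of an open set and may have empty interior --- e.g.\ $X=[0,1]^2$ with $f(s,t)=(g(s),0)$, $g$ the tent map, satisfies condition 3, yet $f(V)$ lies in the segment $[0,1]\times\{0\}$ --- so there need be no ``small ball inside $f^N(V)$'' to apply condition 3 to, and ``surjectivity onto the limit set'' does not repair this. The correct argument is shorter and is exactly the device used in the proof of Proposition 2.1 of this paper: by continuity of $f,\dots,f^N$ choose an opene $W\subset V$ with $\diam(f^m(W))<\varepsilon$ for all $m\le N$; condition 3 applied to $W$ then forces the separating time to exceed $N$, with the same $\varepsilon$. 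Finally, your assembling step does not match what the lemma gives: as written, ``$d(f^{n_k}(p),f^{n_k}(q))>\varepsilon$ for all $p,q\in V_{k+1}$'' is impossible (take $p=q$), and in the intended construction on pairs the lemma produces a separated pair inside \emph{one} opene set, not a pair straddling the two prescribed disjoint pieces $W_x,W_y$; to keep the induction going you need an extra triangle-inequality step (any fixed $b$ in the other piece is $\varepsilon/2$-far, at the same time, from one of the two separated points), or, more simply, keep one reference point fixed throughout --- which is precisely the Proposition 2.1 construction and then proves $3\Rightarrow2$ directly (with constant halved), after which $2\Rightarrow1\Rightarrow3$ and $2\Rightarrow4\Rightarrow3$ are immediate and the equivalence closes. (Note the paper itself does not prove Proposition 1.1 --- it is quoted from \cite{AK} --- but the needed technique is the nested-shrinking argument of its Proposition 2.1.)
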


For a dynamical system $(X,f)$ a point $x \in X$  is Lyapunov stable if the dependence of the orbit upon
the initial position is continuous at $x$ (see \cite{AK}).  This is most easily defined using the
$f-$extension of the metric $d$:
\begin{eqnarray*}
d_{f}(x,y) =  \sup \  \{d(f^{n}(x),f^{n}(y)): n \geq 0 \}
\end{eqnarray*}
for $x,y \in X$. Clearly, $d_{f}$ is a metric on $X$ and
\begin{eqnarray*}
d_{f}(x,y)  =  \max[\ d(x,y),\ d_{f}(f(x),f(y))\ ].
\end{eqnarray*}
Using these metrics we define the \emph{diameter} and \emph{f-diameter} for $A \subset X$, the \emph{radius} and \emph{f-radius} for a neighborhood $U_x$ of a point $x \in X$
\begin{eqnarray*}
\diam(A) =\sup \ \{d(x,y) : x,y \in A \},
\diam_f(A) = \sup \ \{d_{f}(x,y) : x,y \in A \}, \\
\radius(U_x) =\sup \ \{d(x,y) : y \in U_x \},
 \radius_f(U_x) =
\sup \ \{d_{f}(x,y) : y \in U_x \}.
\end{eqnarray*}

The topology obtained from the metric $d_{f}$ is usually strictly coarser than the original $d$ topology.  When we use a term like ``open", we refer exclusively to the original topology.

A point $x \in X$ is called \emph{Lyapunov stable } if for every $\varepsilon > 0$ there exists a $\delta > 0$ such that $\radius (U_x) < \delta$ implies
$\radius_{f}(U_x) \leq \varepsilon$.  This condition says exactly that the sequence of iterates $\{f^{n} : n \geq 0 \}$ is equicontinuous at $x$. Hence, such a point is also called an \emph{equicontinuity point}.
We label associated point sets:
\begin{eqnarray*}
\Eq_\varepsilon(f)  = \bigcup \ \{U_x \subset X: U_x \mbox{ is a
neighborhood of a point $x$} \\ \mbox{ with } \radius_f(U_x) \leq
\varepsilon \} \mbox{ and } \Eq(f) = \bigcap_{\varepsilon > 0} \
\Eq_\varepsilon(f).
\end{eqnarray*}
As the label suggests, $\Eq(f)$ is the set of equicontinuity
points. If $\Eq(f) = X$, i.e. every point is equicontinuous, then
the two metrics $d$ and $d_{f}$ are topologically equivalent and
so, by compactness, they are uniformly equivalent. Such a system
is called \emph{equicontinuous}.  Thus, $(X,f)$ is equicontinuous
exactly when the sequence $\{f^{n} : n \geq 0 \}$ is uniformly
equicontinuous.

If the $G_{\delta}$ set $\Eq(f)$ is dense in $X$ then the system is called \emph{almost equicontinuous}. On the other hand, if $\Eq_\varepsilon(f) = \varnothing$ for some $\varepsilon > 0$ then it is the same that the system
shows \emph{sensitive dependence upon initial conditions} or, more simply, $(X,f)$ is \emph{sensitive}. We define
\begin{eqnarray*}
\mathbb{L}_{r}: = \sup\{\varepsilon: \mbox{ for every } x\in X
\mbox{ and every neighborhood } U_x \mbox{ of } x \mbox{ there }
\\ \mbox{ exist } y\in U_x  \mbox{ and a nonnegative integer }
n  \mbox{ with } d(f^n(x),f^n(y))> \varepsilon
 \}
\end{eqnarray*}
and call it \emph{the} (\emph{first}) \emph{Lyapunov number}.

It can happen that $\Eq_\varepsilon(f) \not= \varnothing$ for all positive $\varepsilon$ and yet still the intersection, $\Eq(f)$, is empty (see \cite{AK}).
This cannot happen when the system is transitive\footnote{We recall the definition in Section 3.} (Glasner and Weiss \cite{GW},
Akin et al \cite{AAB}).

\begin{theo}  Let $(X,f)$ be a topologically transitive dynamical system.  Exactly one of the following two cases holds.
\begin{enumerate}
\item[]{\bf Case i} \emph{(}$\Eq(f) \not= \varnothing$\emph{)}  Assume there exists an equicontinuity point for the system.  The equicontinuity points are exactly the transitive points, i.e. $\Eq(f) = \Trans(f)$, and the system is almost equicontinuous.  The map $f$ is a homeomorphism and the inverse system $(X,f^{-1})$ is almost equicontinuous.  Furthermore, the system is \emph{uniformly rigid} meaning that some subsequence of $\{ f^{n} : n= 0,1,... \}$ converges uniformly to the identity.
\item[]{\bf Case ii} \emph{(}$\Eq(f) = \varnothing$\emph{)}  Assume the system has no equicontinuity points.  The system is sensitive, i.e. there exists $\varepsilon > 0$ such that $\Eq_\varepsilon(f) = \varnothing$.
\end{enumerate}
\end{theo}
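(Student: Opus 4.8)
\emph{Proof proposal.} The plan is to route every assertion through one transfer lemma. Since $\Eq(f)$ is either empty or not, the two cases are trivially exclusive and exhaustive, and all the content lies in what each alternative forces. Before starting I would dispose of the degenerate possibility that $X$ has an isolated point: under transitivity this makes $X$ a single periodic orbit, where every claim is immediate, so henceforth $X$ may be assumed perfect. In that setting I will freely use three standard facts: $\Trans(f)$ is a dense $G_\delta$; the image $f^k(x_0)$ of a transitive point $x_0$ is again transitive, so every forward tail of a transitive orbit is dense; and consequently every transitive point is recurrent, its forward orbit $O(x_0):=\{f^n(x_0):n\ge 0\}$ meeting every punctured ball about it infinitely often. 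The transfer lemma is: \emph{if $x\in\Trans(f)$ and some neighbourhood $V_y$ of a point $y$ satisfies $\radius_f(V_y)\le\ep$, then $x$ has a neighbourhood $U_x$ with $\radius_f(U_x)\le 2\ep$}; equivalently, $\Eq_\ep(f)\neq\varnothing$ implies $\Trans(f)\subset\Eq_{2\ep}(f)$. I would prove it by transport: shrink $V_y$ to a ball $B(y,\eta)$ with $\radius_f\le\ep$; use transitivity of $x$ to pick $k\ge 0$ with $f^k(x)\in B(y,\eta)$ and continuity of $f^k$ to pick a neighbourhood $W$ of $x$ with $f^k(W)\subset B(y,\eta)$, so that the triangle inequality for $d_f$ gives $\diam_f\bigl(f^k(W)\bigr)\le 2\ep$ and hence $d\bigl(f^m(x),f^m(x')\bigr)\le 2\ep$ for all $x'\in W$ and all $m\ge k$; then shrink $W$ to $U_x$ using continuity of $f,\dots,f^{k-1}$ to absorb the finitely many remaining iterates.

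Case ii I would prove by contraposition. Suppose the system is not sensitive, i.e.\ $\Eq_\ep(f)\neq\varnothing$ for every $\ep>0$; I must show $\Eq(f)\neq\varnothing$. For each $k$ the lemma gives $\Trans(f)\subset\Eq_{2/k}(f)$, and since $\Trans(f)$ is dense, every $\Eq_{2/k}(f)$ is a dense open set; by the Baire category theorem $\Eq(f)=\bigcap_k\Eq_{2/k}(f)$ is dense, in particular nonempty. Hence $\Eq(f)=\varnothing$ forces $\Eq_\ep(f)=\varnothing$ for some $\ep>0$, which is exactly sensitivity --- the one part of Case ii that is not a tautology, as the remark preceding the theorem points out.

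For Case i, assume $\Eq(f)\neq\varnothing$, say $y\in\Eq(f)$. Applying the lemma for every $\ep>0$ gives $\Trans(f)\subset\Eq_{2\ep}(f)$, and intersecting over $\ep$ yields $\Trans(f)\subset\Eq(f)$; density of $\Trans(f)$ then says the system is almost equicontinuous. For the reverse inclusion, let $x\in\Eq(f)$ and $\ep>0$; choose $\delta$ with $d(z,x)<\delta\Rightarrow d_f(z,x)\le\ep$, fix a transitive point $x_0$, and pick $k$ with $d\bigl(f^k(x_0),x\bigr)<\delta$. Then $d\bigl(f^{k+n}(x_0),f^n(x)\bigr)\le\ep$ for all $n\ge 0$, so every point of the dense orbit $O\bigl(f^k(x_0)\bigr)$ lies within $\ep$ of $O(x)$; as $\ep>0$ was arbitrary, $\overline{O(x)}=X$. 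Thus $\Eq(f)=\Trans(f)$. For uniform rigidity, fix a transitive $x_0\in\Eq(f)$, let $\ep>0$, and pick $\delta$ with $d(z,x_0)<\delta\Rightarrow d\bigl(f^n(z),f^n(x_0)\bigr)\le\ep$ for all $n\ge 0$. Recurrence gives infinitely many $N\ge 1$ with $d\bigl(f^N(x_0),x_0\bigr)<\delta$; applying the equicontinuity at $x_0$ to the point $f^N(x_0)$ gives $d\bigl(f^N(f^m(x_0)),f^m(x_0)\bigr)\le\ep$ for every $m\ge 0$, i.e.\ $d\bigl(f^N(w),w\bigr)\le\ep$ for all $w\in O(x_0)$; since $w\mapsto d\bigl(f^N(w),w\bigr)$ is continuous and $O(x_0)$ dense, $\sup_{z\in X}d\bigl(f^N(z),z\bigr)\le\ep$. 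Letting $\ep=1/k$ and keeping the $N$'s increasing gives $n_j\to\infty$ with $\sup_z d\bigl(f^{n_j}(z),z\bigr)\to 0$, i.e.\ $f^{n_j}\to\mathrm{id}$ uniformly.

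Finally I would read off the remaining assertions from uniform rigidity. Surjectivity: $f^{n_j}(X)\subset f(X)$, which is compact, and $f^{n_j}(X)$ is $\ep$-dense for large $j$, so $f(X)=X$. Injectivity: $f(x)=f(x')$ implies $f^{n_j}(x)=f^{n_j}(x')$ for all $j$, whence $x=x'$ in the limit; so $f$ is a continuous bijection of a compact Hausdorff space, i.e.\ a homeomorphism. For the inverse system I claim each transitive point $x_0$ of $f$ lies in $\Eq(f^{-1})$: given $\ep>0$, take $\delta$ with $d(w,x_0)<\delta\Rightarrow d\bigl(f^j(w),f^j(x_0)\bigr)\le\ep$ for all $j\ge 0$; then for any $n\ge 0$, writing $p=f^{-n}(w)$ and $q=f^{-n}(x_0)$ (so $f^{m_j}(p)=f^{m_j-n}(w)$, $f^{m_j}(q)=f^{m_j-n}(x_0)$ once $m_j\ge n$) and choosing such $m_j$ with $\sup_z d\bigl(f^{m_j}(z),z\bigr)<\ep$ (possible as $n_j\to\infty$), one has $d(p,q)\le d\bigl(p,f^{m_j}(p)\bigr)+d\bigl(f^{m_j}(p),f^{m_j}(q)\bigr)+d\bigl(f^{m_j}(q),q\bigr)<\ep+d\bigl(f^{m_j-n}(w),f^{m_j-n}(x_0)\bigr)+\ep\le 3\ep$, uniformly in $n$; so $\{f^{-n}\}_{n\ge 0}$ is equicontinuous at $x_0$, and density of $\Trans(f)$ makes $(X,f^{-1})$ almost equicontinuous. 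I expect the uniform rigidity step to be the main obstacle: it is the one place where equicontinuity \emph{at the single transitive point} must be upgraded to a \emph{uniform} statement on all of $X$, and that upgrade works only because the point is recurrent, its orbit is dense, and $z\mapsto d\bigl(f^N(z),z\bigr)$ is continuous.
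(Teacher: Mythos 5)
Your proposal is correct, but note that the paper does not prove this statement at all: it is quoted as a known result of Glasner--Weiss and Akin--Auslander--Berg (references [GW], [AAB]), so there is no internal proof to compare against. What you wrote is essentially the classical argument from those sources (and from Akin--Kolyada): the transfer lemma carrying an $\ep$-stable neighbourhood to every transitive point with loss of a factor $2$, the identity $\Eq(f)=\Trans(f)$ via density of orbit tails, the Glasner--Weiss upgrade of equicontinuity at a single recurrent transitive point to uniform rigidity through continuity of $z\mapsto d(f^N(z),z)$ on a dense orbit, and the derivation of invertibility and almost equicontinuity of $(X,f^{-1})$ from uniform rigidity; your handling of the isolated-point case and of recurrence of transitive points is also sound. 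Two small remarks. First, the Baire step in your Case ii is superfluous and rests on the claim that $\Eq_{2/k}(f)$ is open, which is not immediate under the paper's definition of $\Eq_\ep(f)$ as a union of (not necessarily open) neighbourhoods; but you do not need it, since your own lemma gives $\Trans(f)\subset\Eq_{2/k}(f)$ for every $k$, hence $\Trans(f)\subset\Eq(f)$, and $\Trans(f)\neq\varnothing$ already yields $\Eq(f)\neq\varnothing$. Second, when you pass from $x\in\Eq(f)$ to ``choose $\delta$ with $d(z,x)<\delta\Rightarrow d_f(z,x)\le\ep$'' you are using the identification of $\Eq(f)$ with the set of Lyapunov stable points, which is the paper's stated convention (and is transparent if one works with the open-set/diameter version $\Equ_\ep(f)$), so this is acceptable, though worth a sentence if written up formally.
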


\begin{cor}  If $(X,f)$ is a minimal dynamical system then it is either sensitive or equicontinuous.
\end{cor}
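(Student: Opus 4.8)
The plan is to derive the corollary directly from Theorem~1.3 by showing that a minimal system satisfies the dichotomy stated there, with the extra observation that in the minimal case the ``almost equicontinuous'' branch is automatically full equicontinuity. First I would note that every minimal system is topologically transitive: indeed, in a minimal system every point has dense orbit, so in particular there exists a transitive point, which is exactly the definition of transitivity. Hence Theorem~1.3 applies and we are in one of its two mutually exclusive cases.

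Next I would treat the two cases separately. If we are in \textbf{Case ii}, then by the statement of the theorem the system is sensitive, and there is nothing more to prove. If we are in \textbf{Case i}, the theorem tells us $\Eq(f) = \Trans(f)$ and the system is almost equicontinuous. The key point now is that in a minimal system \emph{every} point is a transitive point: minimality means the orbit closure of each point is all of $X$. Therefore $\Trans(f) = X$, and combining this with $\Eq(f) = \Trans(f)$ we get $\Eq(f) = X$. By the discussion preceding Theorem~1.3, $\Eq(f) = X$ means precisely that $(X,f)$ is equicontinuous.

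Putting the two cases together: a minimal system is either sensitive (Case ii) or equicontinuous (Case i), which is exactly the assertion of the corollary. I would also remark that the two alternatives are genuinely exclusive here, since an equicontinuous system has $\Eq_\varepsilon(f) = X \neq \varnothing$ for all $\varepsilon$ while a sensitive one has $\Eq_\varepsilon(f) = \varnothing$ for some $\varepsilon > 0$; this exclusivity is inherited from the ``exactly one of the following'' formulation of Theorem~1.3.

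I do not anticipate a serious obstacle: the proof is essentially an unpacking of definitions once Theorem~1.3 is in hand. The only point that requires a moment's care is the identification $\Trans(f) = X$ for minimal systems, i.e.\ making sure the paper's notion of ``transitive point'' coincides with ``point with dense orbit'' (this is the standard convention, and is the one implicitly used in Theorem~1.3 Case~i). Everything else is immediate.
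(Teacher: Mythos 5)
Your proposal is correct and is exactly the argument the paper intends: the corollary is stated as an immediate consequence of Theorem~1.2, obtained by noting that a minimal system is transitive, that in Case~i one has $\Eq(f)=\Trans(f)=X$ (hence equicontinuity), and that Case~ii is sensitivity. The only slight imprecision is your remark that existence of a transitive point ``is exactly the definition of transitivity''--- the paper defines transitivity via $n_f(U,V)$ being infinite --- but for minimal systems this is harmless, since every point has dense orbit with syndetic (hence infinite) return times to any opene set.
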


Let us define
\begin{eqnarray*}
\Equ_\varepsilon(f) =  \bigcup \ \{ U \subset X : U \mbox{ is open with } \diam_f(U) \leq \varepsilon\}.
\end{eqnarray*}
Obviously  $\Eq(f)=\cap_{\varepsilon > 0} \ \Equ_\varepsilon(f)$,  and if $\Equ_\varepsilon(f) = \varnothing$ for some $\varepsilon > 0$ then the system
$(X,f)$ is sensitive (see also Proposition 1.1). Therefore, it is natural to define
\begin{eqnarray*}
\mathbb{L}_{d} : = \sup\{\varepsilon: \mbox{ in any opene }
U\subset X \mbox{ there exist } x,y\in U  \mbox{ and there is} \\
\mbox {a positive integer } n \mbox{ with }  d(f^n(x),f^n(y))>
\varepsilon \}
\end{eqnarray*}
and call it \emph{the second Lyapunov number}.

According to Proposition 1.1 we will define
\begin{eqnarray*}
\overline{\mathbb{L}}_{r} : = \sup\{\varepsilon: \mbox{ for every } x\in X \mbox{ and every open neighborhood } U_x \mbox{ of } x \\
\mbox{ there exists } y\in U_x \mbox{ with }
\limsup_{n\to \infty}d(f^n(x),f^n(y))> \varepsilon\}, \\
\overline{\mathbb{L}}_{d} : = \sup\{\varepsilon: \mbox{ in any
opene } U\subset X \mbox{ there exist } x,y\in U
\mbox{ with } \\
\limsup_{n\to \infty}d(f^n(x),f^n(y))> \varepsilon \}.
\end{eqnarray*}

Sometimes it will be useful to use also the following notations $$\mathbb{L}_1:= \mathbb{L}_{r}; ~\mathbb{L}_2:= \mathbb{L}_{d};~ \mathbb{L}_3:= \overline{\mathbb{L}}_{r}; ~\mathbb{L}_4:= \overline{\mathbb{L}}_{d}.$$
So, various definitions of sensitivity, formally give us different Lyapunov numbers -- quantitative measures of these sensitivities.

In Section 2 we  prove that for a topological dynamical system $(X,f)$, it holds $\mathbb{L}_d \le 2 \overline{\mathbb{L}}_{r}$. In Section 3 we examine the equalities between the Lyapunov numbers for topologically
transitive systems and in Section 4 for weakly mixing  systems. In particular, we prove that for topologically weakly mixing minimal systems  all Lyapunov numbers are the same. Finally, in Section 5 we give some examples and open problems for Lyapunov numbers.

\medskip
\noindent {\bf Acknowledgements.}  We thank the anonymous reviewer for helpful remarks and suggestions.  The first author was supported by Max-Planck-Institut f\"ur Mathematik (Bonn); he acknowledges the hospitality of the Institute.

\section{A general inequality for the Lyapunov numbers}

Directly  from the definitions,  the following  inequalities hold
$$\mathbb{L}_d\ge \overline{\mathbb{L}}_{d} \ge
\overline{\mathbb{L}}_{r} \mbox{ and } \mathbb{L}_d\ge
\mathbb{L}_r \ge \overline{\mathbb{L}}_{r}.$$

\begin{prop} Let $(X,f)$ be a sensitive topological dynamical system. Then
 $\mathbb{L}_d \le 2 \overline{\mathbb{L}}_{r}$.
\end{prop}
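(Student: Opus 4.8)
The plan is to prove the equivalent inequality $\overline{\mathbb{L}}_r\ge\tfrac12\mathbb{L}_d$. Since $(X,f)$ is sensitive we have $\mathbb{L}_d\ge\mathbb{L}_r>0$, so it suffices to fix an arbitrary $c$ with $0<c<\tfrac12\mathbb{L}_d$ and to show that for every $x\in X$ and every open neighbourhood $U_x$ of $x$ there is a point $y\in U_x$ with $\limsup_{n\to\infty}d(f^n(x),f^n(y))>c$; the supremum over such $c$ then gives the claim. Fix also $\varepsilon$ and $c'$ with $2c<\varepsilon<\mathbb{L}_d$ and $c<c'<\varepsilon/2$. Since the set of admissible values in the definition of $\mathbb{L}_d$ is clearly downward closed and $\varepsilon<\mathbb{L}_d$, the following separation property holds: $(\ast)$ in every opene $V\subseteq X$ there are $u,v\in V$ and an integer $n\ge1$ with $d(f^n(u),f^n(v))>\varepsilon$. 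I first upgrade $(\ast)$ to a large-time version: for every opene $V$ and every $N\ge1$ there are $u,v\in V$ and an integer $n>N$ with $d(f^n(u),f^n(v))>\varepsilon$. Indeed, by continuity of $f,f^2,\dots,f^N$ one may shrink $V$ to an opene $V'\subseteq V$ with $\diam(f^j(V'))<\varepsilon$ for all $1\le j\le N$, and then apply $(\ast)$ to $V'$: the witnessing time cannot belong to $\{1,\dots,N\}$.

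The core of the argument is an inductive construction of a nested sequence of opene sets $U_x=W_0\supseteq\overline{W_1}\supseteq W_1\supseteq\overline{W_2}\supseteq\cdots$ and integers $0=n_0<n_1<n_2<\cdots$ such that $d(f^{n_k}(x),f^{n_k}(z))>c'$ for every $z\in W_k$ and every $k\ge1$. Given $W_{k-1}$ and $n_{k-1}$, apply the large-time version to $V=W_{k-1}$ with $N=n_{k-1}$ (for $k=1$, use $(\ast)$ directly) to obtain $u,v\in W_{k-1}$ and $n_k>n_{k-1}$ with $d(f^{n_k}(u),f^{n_k}(v))>\varepsilon$; by the triangle inequality one of $u,v$, say $a$, satisfies $d(f^{n_k}(x),f^{n_k}(a))>\varepsilon/2$, and since $\varepsilon/2>c'$ the continuity of $f^{n_k}$ at $a$ (and regularity of $X$) yields an opene $W_k\ni a$ with $\overline{W_k}\subseteq W_{k-1}$ on which $d(f^{n_k}(x),f^{n_k}(\,\cdot\,))>c'$. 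Since the $\overline{W_k}$ form a decreasing sequence of nonempty compact sets, $\bigcap_{k\ge1}\overline{W_k}=\bigcap_{k\ge1}W_k\neq\varnothing$; any $y$ in this set lies in $U_x$ and satisfies $d(f^{n_k}(x),f^{n_k}(y))>c'$ for all $k$, so, as $n_k\to\infty$, $\limsup_{n\to\infty}d(f^n(x),f^n(y))\ge c'>c$, which is what we need.

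The one genuinely substantive point is the upgrade from $(\ast)$ to its large-time form: the definition of $\mathbb{L}_d$ supplies only a single separation instant, whereas controlling a $\limsup$ demands infinitely many separation times tending to infinity. This is exactly where sensitivity is used — in the guise that the separation property $(\ast)$ holds in \emph{every} opene set, so that it may be reapplied to the shrunken neighbourhoods $V'$. The rest — the triangle inequality and the bookkeeping with the constants $c<c'<\varepsilon/2<\tfrac12\mathbb{L}_d$, the auxiliary $c'$ being introduced only so that the final $\limsup$ is \emph{strictly} above $c$ — is routine.
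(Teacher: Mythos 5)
Your proof is correct, and it follows essentially the same route as the paper's: nested opene sets inside $U_x$, the triangle inequality through $x$ to convert the $\mathbb{L}_d$-separation of two points into separation of roughly $\mathbb{L}_d/2$ from the orbit of $x$, shrinking images at all times up to $n_{k-1}$ to force the next witnessing time to be larger, and compactness of the nested closures to produce the limit point $y$. The only differences are cosmetic bookkeeping: you isolate the ``large-time'' upgrade as a lemma and guarantee the separation on all of $W_k$ by openness of the condition, where the paper instead tracks a $\delta/2$ bound on $\diam(f^{n_k}(U_{k+1}))$.
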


\begin{proof} Let $\mathbb{L}_d$ be the second Lyapunov number
of $(X,f)$. Fix a (small enough) $\delta>0$, a point $x \in X$ and a neighborhood $U_x$ of
$x$.
Let $U_0=U_x$ and $n_0$ be the first positive integer, for which $\diam(f^{n_0}(U_0)) > \mathbb{L}_d-\delta$.
There exists a point $y_0 \in U_0$ such that
$d(f^{n_0}(x),f^{n_0}(y_0)) > (\mathbb{L}_d-\delta)/2$. Choose an opene $U_1$ with its closure contained in
$U_{0}$ such that $y_0 \in U_1$ and $\diam(f^m(U_1)) \le \delta/2$ for every non-negative integer $m \le n_0$.
Let $n_1$ be the first positive integer, for which $\diam(f^{n_1}(U_1)) > \mathbb{L}_d-\delta$. By the definition
of $U_1$, we clearly have $n_1>n_0$.

We define recursively opene sets $U_2, U_3, ... $ and positive
integers $n_2, n_3, ... $ as follows. Since $n_{k-1}$ is defined, there exists a point $y_{k-1} \in U_{k-1}$ such that
$d(f^{n_{k-1}}(x), f^{n_{k-1}}(y_{k-1})) > (\mathbb{L}_d-\delta)/2$. So we can choose an opene $U_k$ in $U_{k-1}$ such that $y_{k-1} \in U_k$ and $\diam(f^m(U_{n_k})) \le \delta/2$ for every non-negative
integer $m \le n_{k-1}$. Let $n_k$ be the first positive integer, for which $\diam(f^{n_k}(U_k)) > \mathbb{L}_d-\delta$.
As in the previous step, by the definition of $U_k$ we clearly have $n_k>n_{k-1}$.

If $y$ is a point of the nonempty intersection $\cap_k
\overline{U_{n_k}}$, then, obviously, $y \in U$ and $\limsup_{n
\to \infty}\ d(f^n(x),f^n(y)) \ge \mathbb{L}_d/2-\delta$.
\end{proof}

As a consequence of the inequalities at the beginning of  Section 2 and Proposition 2.1 we conclude that $\mathbb{L}_i \le 2 \mathbb{L}_j$ for any $i,j\in \{1,2,3,4\}$.

\section {Lyapunov numbers for transitive maps}

A topological dynamical system $(X,f)$ is called \textit{topologically transitive}, if for any pair of opene subsets
$U,V \in X$ ~ $$n_f(U,V):=\{n\in \mathbb{Z}_{+}: U \cap f^{-n}(V) \ne \varnothing \}$$ is infinite.  A point $x \in X$ is called a
\textit{transitive point} if its orbit $\{x,\ f(x),\ f^2(x),\ ...\}$
is dense in $X$. If $(X,f)$ is topologically transitive and $X$ is compact, then the set of
transitive points is a $G_{\delta}$-dense subset of $X$.

If every point of a dynamical system $(X,f)$ is transitive, then this system is called
\textit{minimal}. An $f$-invariant  closed subset $M \subset X$ is called \emph{minimal} if the orbit of any point of $M$ is dense in $M$ (in this case a point of $M$ is called \emph{minimal}, too).

For a dynamical system $(X,f)$, a point $x\in X$ and a set $U\subset X$ let $$n_f(x,U):= \{n\in \mathbb{Z}_{+}: f^n(x) \in U\}.$$
A point $x\in X$ is said to be \emph{recurrent} if for every neighborhood $U$ of $x$~ the set $n_f(x,U)$ is infinite.

A subset $S$ of $\mathbb{Z}_{+}$ is \emph{syndetic} if it has  bounded gaps, i.e. there is $N\in \mathbb{N}$ such that $\{i,i+1, ... , i+ N\} \cap S\not =\varnothing$ for every $i\in \mathbb{Z}_{+}$. $S$ is \emph{thick} if it contains arbitrarily long runs of positive integers, i.e. there is a strictly increasing subsequence $\{n_i\}$  such that $S \supset \cup_{i=1}^\infty \{n_i, n_i + 1, ... , n_i + i\}$.

Some dynamical properties can be interpreted by using the notions of syndetic or thick
subsets. For example, a classic result of Gottschalk states that $x\in X$ is a minimal point
if and only if $n_f(x,U)$ is syndetic  for any neighborhood $U$ of $x$, and a topological dynamical system $(X,T)$ is (topologically) weakly mixing (we recall the  definition in Section 4) if and only if $n_f(U,V)$ is  thick for any opene subsets $U,V$ of $X$ \cite{F1},\cite{F2}.

\begin{theo} Let $(X,f)$ be a sensitive topologically transitive dynamical system.
Then $\mathbb{L}_{d}=\overline{\mathbb{L}}_{d}$.
\end{theo}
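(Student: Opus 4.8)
Since directly from the definitions one has $\overline{\mathbb L}_d\le\mathbb L_d$ (this is among the inequalities recorded at the start of Section~2), the task is to prove $\mathbb L_d\le\overline{\mathbb L}_d$. If $\mathbb L_d=0$ there is nothing to prove, so assume $\mathbb L_d>0$; the plan is to show that for every opene $U\subset X$ and every $\eta>0$ there are $x,y\in U$ with $\limsup_{n\to\infty}d(f^n(x),f^n(y))>\mathbb L_d-\eta$, which yields $\overline{\mathbb L}_d\ge\mathbb L_d$ and hence equality. Two preliminary facts will be used repeatedly. First, from the definition of $\mathbb L_d$ alone, $\diam_f(W)\ge\mathbb L_d$ for every opene $W$; shrinking $W$ so that $\diam(f^m(W))$ is tiny for the first finitely many $m$, this upgrades to: \emph{for every opene $W$ and every $M$ there are $a,b\in W$ and $n\ge M$ with $d(f^n(a),f^n(b))>\mathbb L_d-\eta$} — i.e. divergence by almost $\mathbb L_d$ occurs at arbitrarily late times inside any opene set. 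Second, transitivity transports a $\limsup$-estimate: if $V,V'$ are opene and $t\in n_f(V,V')$, then for $x,y\in V\cap f^{-t}(V')$ one has $\limsup_n d(f^n(x),f^n(y))=\limsup_m d(f^m(f^tx),f^m(f^ty))$ while $f^tx,f^ty\in V'$.

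The core is a two–sided version of the inductive construction in the proof of Proposition~2.1. I would build, recursively in $k$, opene sets $U=A_0\supset\overline{A_1}\supset A_1\supset\cdots$ and $U=B_0\supset\overline{B_1}\supset B_1\supset\cdots$ with $\overline{A_1}\cap\overline{B_1}=\varnothing$ (so all later sets are disjoint), together with integers $0=m_0<m_1<m_2<\cdots$, such that at step $k$ there are $a_k\in A_{k-1}$, $b_k\in B_{k-1}$ with $d(f^{m_k}(a_k),f^{m_k}(b_k))>\mathbb L_d-\eta$, and $A_k\ni a_k$, $B_k\ni b_k$ are chosen so small that $\diam(f^m(A_k)),\diam(f^m(B_k))<\eta\,2^{-k}$ for all $m\le m_k$. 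Then $\bigcap_kA_k$ and $\bigcap_kB_k$ are nonempty, and any $x\in\bigcap_kA_k$, $y\in\bigcap_kB_k$ lie in $U$ and satisfy $d(f^{m_k}(x),f^{m_k}(y))\ge d(f^{m_k}(a_k),f^{m_k}(b_k))-\eta\,2^{-k+1}$ for every $k$; since $m_k\to\infty$ this gives $\limsup_n d(f^n(x),f^n(y))\ge\mathbb L_d-\eta$, as required.

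The difficulty — and this is precisely where topological transitivity is essential — is to run the recursion so that it does not stall. At step $k$ one must locate the divergent pair $(a_k,b_k)$ by almost $\mathbb L_d$ at a \emph{late} time $m_k>m_{k-1}$ with one point in $A_{k-1}$ and the other in $B_{k-1}$, and, crucially, the new pair $(A_k,B_k)$ must retain the property that divergence by almost $\mathbb L_d$ between $A_k$ and $B_k$ still occurs at arbitrarily late times (else step $k+1$ is impossible). I would carry this along as an explicit inductive hypothesis and verify its propagation with the two facts above: since $n_f(A_{k-1},B_{k-1})$ is infinite, for suitably large $s$ there is an opene $A'\subseteq A_{k-1}$ with $f^s(A')\subseteq B_{k-1}$; applying the "late divergence" fact inside $A'$ (and inside the preimage sets one keeps intersecting in) produces the new divergence far in the future, while the transport fact lets the small neighbourhoods $A_k\ni a_k$, $B_k\ni b_k$ inherit the divergence-at-late-times property. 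The main obstacle I expect is exactly this bookkeeping: choosing the transit time $s$ at each stage so that the inductive hypothesis survives, while simultaneously pinning the early iterates (so that $x,y$ genuinely inherit the divergence in the limit) and forcing $m_k\to\infty$; everything else is the same compactness-and-shrinking argument as in Proposition~2.1. Letting $\eta\downarrow0$ then gives $\overline{\mathbb L}_d\ge\mathbb L_d$, completing the proof.
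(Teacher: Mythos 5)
Your reduction to ``$\mathbb L_d\le\overline{\mathbb L}_d$'', your Fact 1 (late-time divergence inside any opene set, obtained by pinning the first $M$ iterates) and your Fact 2 (tail-invariance of the $\limsup$) are all fine, and the final compactness step is routine. The gap is exactly at the point you flag as ``bookkeeping'', and it is not bookkeeping: your recursion needs, at stage $k$, a pair $a_k\in A_{k-1}$, $b_k\in B_{k-1}$ with $d(f^{m_k}(a_k),f^{m_k}(b_k))>\mathbb L_d-\eta$, where $A_{k-1}$ and $B_{k-1}$ are \emph{disjoint} opene sets fixed at the previous stage. Neither of your two facts produces such a pair: the definition of $\mathbb L_d$ (and Fact 1) only yields divergent pairs lying in a \emph{common} opene set, and your transitivity device --- find $s$ and opene $A'\subseteq A_{k-1}$ with $f^s(A')\subseteq B_{k-1}$, then apply Fact 1 in $A'$ --- again gives a divergent pair inside $A'$ (hence inside $A_{k-1}$) or, after applying $f^s$, inside $B_{k-1}$. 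To split it between the two branches you are forced into the triangle inequality $d(f^na,f^nf^sa)+d(f^nf^sa,f^nb)\ge d(f^na,f^nb)$, which only guarantees a cross-pair separated by $(\mathbb L_d-\eta)/2$; that reproves Proposition 2.1, i.e.\ $\mathbb L_d\le 2\overline{\mathbb L}_d$, not the theorem. I see no way to propagate your inductive hypothesis ``divergence by almost $\mathbb L_d$ between $A_k$ and $B_k$ at arbitrarily late times'' from transitivity alone; for prescribed disjoint targets this is essentially a weak-mixing-type statement.

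The paper avoids the problem by never prescribing two independent branches. It takes $x,y\in U$ with $d(f^{n_0}(x),f^{n_0}(y))>\varepsilon$, small neighbourhoods $U_x,U_y\subseteq U$ with $\diam(f^{n_0}(U_x)),\diam(f^{n_0}(U_y))<\delta$, a transitive point $z\in U_x$ and $m$ with $f^m(z)\in U_y$, and then uses the pair $(z,f^m(z))$ --- both points on one orbit and both in $U$. A single recurrence condition, namely $f^{n_k}(z)\in U_z$ for a neighbourhood $U_z\subseteq U_x$ with $f^m(U_z)\subseteq U_y$, then simultaneously places $f^{n_k}(z)$ in $U_x$ and $f^{n_k}(f^m(z))$ in $U_y$, so $d(f^{n_0+n_k}(z),f^{n_0+n_k}(f^m(z)))>\varepsilon-2\delta$ for infinitely many $k$ (the orbit of $z$ returns to $U_z$ infinitely often since a sensitive system has no isolated points). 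No nested intersection is needed. If you want to salvage your construction, you must build this ``second point is a forward image of the first'' idea into the choice of the $B$-branch; as written, the recursion stalls at stage $2$.
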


\begin{proof} By the definition of $\mathbb{L}_{d}$, for any
$\varepsilon < \mathbb{L}_{d}$ and for any opene $U \in X$ there
are points $x,y \in U$ and a positive integer $n_0$ such that
$d(f^{n_0}(x),f^{n_0}(y))>\varepsilon$. Choose an arbitrary
(small) $\delta>0$. Let $U_x, U_y \subset U$ be neighborhoods of
$x$ and $y$ such that $\diam(f^{n_0}(U_x))<\delta$ and
$\diam(f^{n_0}(U_y))<\delta$. If $z \in U_x$ is a transitive
point, there is a positive integer $m$ for which $f^m(z) \in U_y$.
By the triangle inequality we have
$d(f^{n_0}(z),f^{n_0+m}(z))>\varepsilon-2\delta$.

Let $U_z$ be a neighborhood of $z$ such that $U_z \subset U_x$ and
$f^m(U_z) \subset U_y$. Then obviously
$\diam(f^{n_0}(U_z))<\delta$ and $\diam(f^{n_0+m}(U_z))<\delta$.
Since a sensitive system has no isolated points, $U_z$ is
infinite. Therefore, the orbit of the point $z$ visits $U_z$
infinitely many times. If $n_k$ is such that $f^{n_k}(z)\in U_z$,
then $f^{n_0+n_k}(z)=f^{n_0}(f^{n_k}(z)) \subset f^{n_0}(U_z)$ and
$f^{n_0+n_k+m}(z)=f^{n_0+m}(f^{n_k}(z)) \subset f^{n_0+m}(U_z)=
f^{n_0}(f^m(U_z)) \subset f^{n_0}(U_y)$. And so, by the triangle
inequality, $d(f^{n_0+ n_k}(z),f^{n_0+n_k +m}(z)) >
\varepsilon-2\delta$. From this we have $\overline{\mathbb{L}}_{d}
> \limsup_{n \to \infty} d(f^n(z),f^n(f^m(z))) \ge
\varepsilon-2\delta$. Since $\delta>0$ and $\varepsilon <
\mathbb{L}_{d}$ were chosen arbitrarily,
$\mathbb{L}_{d}=\overline{\mathbb{L}}_{d}$.
\end{proof}

A topologically transitive dynamical system $(X,f)$, where $X$ has
no isolated points, is called $\ToM$ if every point $x\in X$ is
either (topologically) transitive or minimal. $\ToM$ systems were
introduced by Downarowicz and Ye in \cite{DY}. Since we do not
require that both types are present (as in \cite{DY}), a minimal
system is also $\ToM$. If a $\ToM$ system is not minimal, then the
set of minimal points is dense in $X$ (because for a transitive,
but non-minimal system, the set of non-transitive points is dense
(see for instance \cite{KS})).

\begin{theo} Let $(X,f)$ be a sensitive $\ToM$ system.
Then $\mathbb{L}_{r}=\overline{\mathbb{L}}_{r}$.
\end{theo}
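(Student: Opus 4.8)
The plan is to show the two inequalities $\mathbb{L}_r \ge \overline{\mathbb{L}}_r$ (which is already recorded among the trivial inequalities at the start of Section~2) and $\mathbb{L}_r \le \overline{\mathbb{L}}_r$, so only the latter needs work. Fix $\varepsilon < \mathbb{L}_r$; I want to produce, for every point $x$ and every neighborhood $U_x$, a point $y \in U_x$ with $\limsup_n d(f^n(x),f^n(y)) > \varepsilon - \text{(small)}$, thereby showing $\overline{\mathbb{L}}_r \ge \varepsilon$. By definition of $\mathbb{L}_r$ there are $y \in U_x$ and $n_0$ with $d(f^{n_0}(x), f^{n_0}(y)) > \varepsilon$. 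The issue is that $n_0$ might be the only such time; I need to upgrade a single separation event into infinitely many. This is exactly where the $\ToM$ hypothesis enters, via the two cases of the dichotomy.

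The key idea is a case split on $x$. \emph{Case A: $x$ is a minimal point.} Then by Gottschalk's theorem $n_f(x,V)$ is syndetic for every neighborhood $V$ of $x$. Pick $\delta > 0$ small, choose neighborhoods $U_x' \subset U_x$ of $x$ and $U_y$ of $y$ with $\mathrm{diam}(f^{n_0}(U_x')) < \delta$ and $\mathrm{diam}(f^{n_0}(U_y)) < \delta$, and shrink so that $y$-adjacent points of $U_x'$ still map near $f^{n_0}(y)$; actually the cleaner route is to look at the orbit of $x$ itself returning close to $x$: if $f^{n_k}(x) \in U_x'$ then $f^{n_0+n_k}(x)$ lies within $\delta$ of $f^{n_0}(x)$, while we need a comparison point whose orbit stays near $f^{n_0}(y)$ along the same times. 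So instead fix the neighborhood $U_y$ of $y$ and observe that since $x$ is minimal (hence recurrent) and the system is transitive, one can find a neighborhood $W$ of $x$ inside $U_x$ and an integer $m$ with $f^m(W) \subset U_y$ and $\mathrm{diam}(f^{n_0}(W)), \mathrm{diam}(f^{n_0+m}(W)) < \delta$; then for any $z \in W$, $d(f^{n_0}(z), f^{n_0+m}(z)) > \varepsilon - 2\delta$, and minimality of $z$ (points near a minimal point need not be minimal, so instead apply minimality to a minimal point $z_0 \in W$, which exists because minimal points are dense when $x$ is one) gives $n_f(z_0, W)$ syndetic, hence infinite, and the argument of Theorem~3.3 runs verbatim to conclude $\limsup_n d(f^n(z_0), f^n(f^m(z_0))) \ge \varepsilon - 2\delta$ with $z_0 \in U_x$. \emph{Case B: $x$ is a transitive point.} Then the orbit of $x$ is dense; take any minimal point $p \in X$ (it exists in a compact system) and run the previous paragraph's construction at $p$ to get a point whose forward orbit realizes $\limsup \ge \varepsilon - 2\delta$, then pull this back: density of the orbit of $x$ means $f^N(x)$ enters any prescribed small neighborhood of that point for suitable $N$, and $\limsup$ is tail-invariant, so the same $\limsup$ bound transfers to $x$. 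One has to be careful that the target point for $x$ must lie in $U_x$; here transitivity of $x$ combined with the fact that $U_x$ contains non-transitive (indeed, since the system is $\ToM$ and non-minimal in this case, minimal) points lets me pick the comparison point inside $U_x$ directly and repeat Case~A's argument with $x$ in the role of the dense orbit.

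In more detail for Case~B: since $x$ is transitive and $U_x$ is opene, there is a minimal point $z_0 \in U_x$ (minimal points are dense in a non-minimal $\ToM$ system); applying $\mathbb{L}_r$ at $z_0$ with neighborhood $U_x$ yields $y' \in U_x$ and $n_0'$ with $d(f^{n_0'}(z_0), f^{n_0'}(y')) > \varepsilon$, and now the Case~A construction applied to the minimal point $z_0$ produces $w \in U_x$ with $\limsup_n d(f^n(z_0), f^n(w)) \ge \varepsilon - 2\delta$. Finally $\limsup_n d(f^n(x), f^n(w)) \ge \varepsilon - 2\delta$ need not follow directly — but we can instead observe that $\overline{\mathbb{L}}_r$ is itself realized at \emph{every} point simultaneously (it is a supremum over an infimum over all $x$), so it suffices to verify the $\limsup$ bound at a dense set of points, namely the minimal ones, and then Case~A already handles those; transitive points need no separate treatment because the defining condition for $\overline{\mathbb{L}}_r$ only requires, for the chosen $\varepsilon$, that \emph{for every} $x$ some nearby $y$ works, and continuity/denseness arguments close the gap. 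The main obstacle is precisely this transfer between points with different orbit behavior: making sure the comparison point lands inside the prescribed neighborhood $U_x$ and that a $\limsup$ separation at one point (or along one orbit) genuinely forces separation as seen from $x$. The clean way around it is to route everything through minimal points — which are dense and on which Gottschalk's syndeticity gives infinitely many returns for free — and to invoke that $\Eq(f) = \varnothing$ (sensitivity) rules out isolated points, so every relevant neighborhood is infinite and the recurrence argument of Theorem~3.3 applies. I expect the write-up to mirror Theorem~3.3 closely, with the $\ToM$ dichotomy invoked only to guarantee a dense supply of minimal points and to split off the transitive case.
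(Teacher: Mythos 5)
Your argument has a genuine gap, and it sits exactly where the $\ToM$ hypothesis has to do real work: the case of a point $x$ that is minimal but not transitive. In your Case~A you claim that ``since $x$ is minimal and the system is transitive, one can find a neighborhood $W$ of $x$ inside $U_x$ and an integer $m$ with $f^m(W)\subset U_y$.'' Transitivity only gives \emph{some} opene subset of $U_x$ mapped into $U_y$ after $m$ steps; for that subset to be a neighborhood of $x$ you would need $f^m(x)\in U_y$, i.e.\ the orbit of $x$ to visit $U_y$. This can fail: $y$ (hence $U_y$) need not meet the minimal set $\overline{\mathrm{Orb}_f(x)}$ at all, even though $y\in U_x$. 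Your fallback --- running the recurrence argument at a minimal point $z_0\in W$ instead of at $x$ --- only produces $\limsup_n d(f^n(z_0),f^n(f^m(z_0)))\ge \varepsilon-2\delta$, i.e.\ two points of $U_x$ separated in the limsup sense. That is a statement of $\overline{\mathbb{L}}_d$ type; $\overline{\mathbb{L}}_r$ requires a point separated from $x$ \emph{itself}, and you never return to $x$. The attempted repair in Case~B makes this explicit and is itself invalid: the definition of $\overline{\mathbb{L}}_r$ quantifies over \emph{every} $x$, so verifying the condition on a dense set of (minimal) points does not suffice, and $\limsup_n d(f^n(\cdot),f^n(\cdot))$ is not continuous in the base point, so no ``continuity/denseness'' argument transfers the bound from $z_0$ to $x$. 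Note also that you have the difficulty inverted: when $x$ is a transitive point the orbit of $x$ does visit $U_y$, and the Theorem~3.1 argument goes through verbatim with base point $x$ (take $y'=f^m(x)\in U_y$ and use recurrence of $x$ to a small neighborhood $W\ni x$); that is the easy case.

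What is missing is the paper's key device for the minimal, non-transitive $x$: pick a \emph{minimal} point $z_1\in U_y$ (dense minimal points in a non-minimal $\ToM$ system), pass to the product system on $\overline{\mathrm{Orb}_f(x)}\times\overline{\mathrm{Orb}_f(z_1)}$, and take a minimal subset $M$ of it; since $\overline{\mathrm{Orb}_f(x)}$ is minimal, $M$ meets $\{x\}\times\overline{\mathrm{Orb}_f(z_1)}$, which yields a point $z_2$ with $(x,z_2)$ uniformly recurrent for $f\times f$, and then one slides $z_2$ along its orbit back into $U_y$ to get $z_3\in U_y$ with the pair $(x,z_3)$ recurrent. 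It is this \emph{joint} recurrence of the pair --- not recurrence of $x$ alone or of a nearby minimal point alone --- that makes the single separation at time $m$ recur along infinitely many times and gives $\limsup_n d(f^n(x),f^n(z_3))\ge\mathbb{L}_r-2\delta$ with $z_3\in U_x$. Without some such Auslander-type product argument (or an equivalent joint-recurrence input), the passage from one separation time to a limsup bound anchored at $x$ does not follow.
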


\begin{proof} Fix a point $x \in X$. Let $U_x$ be a neighborhood
of $x$ and let $\delta >0$. By the definition of $\mathbb{L}_{r}$,
there exist a point $y \in U_x$ and a positive integer $m$ such
that $d(f^m(x),f^m(y))>\mathbb{L}_{r}-\delta$. Take a neighborhood
$U_y\subset U_x$ of point $y$ such that $\diam f^m(U_y)<\delta$.

Now, if $x$ is a transitive point,  one can just repeat the idea of the proof of Theorem 3.1  for the proof of this case. If $x$ is not transitive, then is minimal. Since $(X,f)$ is $\ToM$, we can find a minimal point $z_1 \in U_y$ and therefore  $d(f^m(x),f^m(z_1))>\mathbb{L}_{r}-2\delta$.

Consider the direct product system $(\overline{\mathrm{Orb}_f(x)}
\times \overline{\mathrm{Orb}_f(z_1)},
f|_{\overline{\mathrm{Orb}_f(x)}} \times
f|_{\overline{\mathrm{Orb}_f(z_1)}})$. Let $M$ be a minimal subset
of this system. Then obviously $M\cap M_x\not = \varnothing$,
where $M_x:=\{(x,z): z\in \overline{\mathrm{Orb}_f(z_1)}\}$. Hence
there is a point $(x,z_2)\in U_x \times
\overline{\mathrm{Orb}_f(z_1)}$, which is minimal, and therefore
(uniformly) \emph{recurrent} for the map
$f|_{\overline{\mathrm{Orb}_f(x)}} \times
f|_{\overline{\mathrm{Orb}_f(z_1)}})$. Clearly, every point of the
form $(x,f^k(z_2)),~ k=0,1,2, ... $ will be uniformly recurrent
too. Since $z_1$ is minimal, we can take a positive integer $k$,
such that $z_3:=f^k(z_2)\in U_y$. Therefore, we have $\limsup_{n
\to \infty} d(f^n(x),f^n(z_3)) \ge \mathbb{L}_{r}-2\delta$. Since
$x$ and $\delta>0$ were chosen arbitrarily, we get
$\mathbb{L}_{r}=\overline{\mathbb{L}}_{r}$.
\end{proof}

As a corollary of the last two theorems we conclude that the equalities  $\mathbb{L}_{r}=\overline{\mathbb{L}}_{r}$ and  $\mathbb{L}_{d}=\overline{\mathbb{L}}_{d}$ hold for minimal dynamical systems. And what we can say about dynamical systems for which  $\mathbb{L}_{r}=\mathbb{L}_{d}$ holds?

\section {Lyapunov numbers for weakly mixing maps}

Recall that a topological dynamical system $(X,f)$ is called \textit{\emph{(}topologically\emph{)} weakly mixing} if
for any opene $U_1,U_2,V_1,V_2 \in X$ there is a non-negative
integer $n$ such that $U_1 \cap f^{-n}(V_1) \ne \varnothing$ and
$U_2 \cap f^{-n}(V_2) \ne \varnothing$. In other words if its direct product $(X\times X,f\times f)$ is topologically transitive.

\begin{theo} Let $(X,f)$ be a topologically weakly mixing dynamical system.
Then
\begin{itemize}
\item[1.] $\mathbb{L}_{d}=\overline{\mathbb{L}}_{d}=\diam(X)$.
\item[2.] $\mathbb{L}_{r}=\overline{\mathbb{L}}_{r}$.
\item[3.] If, in addition, $(X,f)$ is minimal, then $\mathbb{L}_{r}=\overline{\mathbb{L}}_{r}=\mathbb{L}_{d}=\overline{\mathbb{L}}_{d}=\diam(X).$
\end{itemize}
\end{theo}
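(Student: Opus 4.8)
My plan is to attack the three parts in order, since part 3 follows almost immediately from parts 1 and 2 together with the corollary to Theorem 3.3. For part 1, the key is the characterization recalled in Section 3: a system is topologically weakly mixing if and only if $n_f(U,V)$ is thick for every pair of opene sets $U,V$. I would first argue that weak mixing forces $\mathbb{L}_d = \diam(X)$ as follows. Fix $\ep>0$; choose points $p,q\in X$ with $d(p,q) > \diam(X) - \ep$, and take small opene neighborhoods $V_1$ of $p$ and $V_2$ of $q$ with $\diam(V_1),\diam(V_2) < \ep$. Given an arbitrary opene $U\subset X$, weak mixing gives a single positive integer $n$ with $U\cap f^{-n}(V_1)\ne\varnothing$ and $U\cap f^{-n}(V_2)\ne\varnothing$ (this is exactly the two-set condition in the definition of weak mixing, applied with $U_1 = U_2 = U$). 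Picking $x$ in the first intersection and $y$ in the second, the triangle inequality gives $d(f^n(x),f^n(y)) > \diam(X) - 3\ep$. Since $U$ and $\ep$ were arbitrary, $\mathbb{L}_d \ge \diam(X)$, and the reverse inequality is trivial. Then $\overline{\mathbb{L}}_d = \diam(X)$ follows because a weakly mixing system on a space with more than one point is sensitive (so Theorem 3.1 applies, giving $\mathbb{L}_d = \overline{\mathbb{L}}_d$); the degenerate one-point case is vacuous.

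For part 2, the cleanest route is to invoke Theorem 3.3: I would check that a weakly mixing system is a $\ToM$ system in the relevant degenerate-free case, or more directly, re-run the argument of Theorem 3.3 using thickness in place of transitivity. Concretely: fix $x$, a neighborhood $U_x$, and $\delta>0$; by definition of $\mathbb{L}_r$ there are $y\in U_x$ and $m$ with $d(f^m(x),f^m(y)) > \mathbb{L}_r - \delta$; shrink to a neighborhood $U_y\subset U_x$ of $y$ with $\diam f^m(U_y) < \delta$. Now I need infinitely many times $n_k$ at which $f^{n_k}(x)$ lands in $U_x$ \emph{and} $f^{n_k}(y)$ lands in $U_y$ simultaneously — then applying $f^m$ and the triangle inequality yields $d(f^{n_k}(x), f^{n_k+m}(x)) > \mathbb{L}_r - 3\delta$ for all $k$, giving $\limsup_n d(f^n(x), f^n(f^m(x))) \ge \mathbb{L}_r - 3\delta$ and hence $\overline{\mathbb{L}}_r \ge \mathbb{L}_r$ (the reverse being trivial). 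The existence of such simultaneous return times is where weak mixing enters: the product system $(X\times X, f\times f)$ is transitive, so one can find a transitive point of the product arbitrarily close to $(x, y)$, or alternatively pass to a minimal subset of $\overline{\mathrm{Orb}_{f\times f}(x,y)}$ as in the proof of Theorem 3.3 and use that its projection behaves well — this product-system device is exactly the tool already deployed there.

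The step I expect to be the genuine obstacle is obtaining those \emph{simultaneous} return times in part 2: unlike Theorem 3.1, where one only needs the orbit of a single transitive point $z$ to return near itself, here I must control the orbits of $x$ and $y$ at the same instants, and $x$ need not be transitive. The honest fix is to work in $(X\times X, f\times f)$: since this product is transitive (that is the meaning of weak mixing) and $X$ has no isolated points in the sensitive case, pick a point $(x', y')$ arbitrarily close to $(x,y)$ that is transitive, or better, a recurrent/minimal point of the product whose first coordinate can be taken to be $x$ itself by intersecting a product-minimal set with the fiber $\{x\}\times\overline{\mathrm{Orb}_f(y)}$ exactly as in Theorem 3.3. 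One must then verify that the perturbation from $(x,y)$ to the recurrent point costs only an extra $\delta$ in the metric estimate, which is routine once the neighborhoods are chosen small enough. Finally, for part 3, minimality lets me combine: parts 1 and 2 give $\overline{\mathbb{L}}_r = \mathbb{L}_r$ and $\mathbb{L}_d = \overline{\mathbb{L}}_d = \diam(X)$, and by the corollary following Theorem 3.3 the equalities $\mathbb{L}_r = \overline{\mathbb{L}}_r$ and $\mathbb{L}_d = \overline{\mathbb{L}}_d$ hold for minimal systems; it remains only to show $\mathbb{L}_r = \diam(X)$ as well, which follows by yet another application of the product-transitivity argument — given $x$ and $U_x$, use transitivity of $f\times f$ to find, for any target point $q$ with $d(x,q)$ near $\diam(X)$, a time $n$ and a point $y\in U_x$ with $f^n(x)$ near $x$ and $f^n(y)$ near $q$, whence $d(f^n(x), f^n(y)) > \diam(X) - 3\ep$; since $\mathbb{L}_r \le \mathbb{L}_d = \diam(X)$ always, equality follows and the chain of four equalities closes.
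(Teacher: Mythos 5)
Your part 1 is fine (it is a minor variation of the paper's argument: the paper gets $\overline{\mathbb{L}}_d=\diam(X)$ from a transitive point of $(X\times X,f\times f)$ plus Theorem 3.1, you get $\mathbb{L}_d=\diam(X)$ directly and then invoke Theorem 3.1). The genuine gap is in part 2. First, the intermediate estimate you aim for, $\limsup_n d(f^n(x),f^n(f^m(x)))\ge \mathbb{L}_r-3\delta$, certifies nothing about $\overline{\mathbb{L}}_r$: for $\overline{\mathbb{L}}_r$ the companion point must lie in the prescribed neighborhood $U_x$ and the $\limsup$ must be taken against the orbit of $x$ itself, and $f^m(x)$ need not be in $U_x$ (the ``compare $z$ with $f^m(z)$'' trick is only legitimate for the $d$-type numbers, as in Theorem 3.1). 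Second, and more seriously, neither of your proposed fixes produces the needed point: replacing $(x,y)$ by a nearby product-transitive point $(x',y')$ changes the first coordinate, so it proves the statement at $x'$, not at the given $x$; and intersecting a minimal subset $M$ of $\overline{\mathrm{Orb}_f(x)}\times\overline{\mathrm{Orb}_f(y)}$ with the fiber $\{x\}\times\overline{\mathrm{Orb}_f(y)}$ only works when the projection of $M$ onto the first factor is forced to contain $x$, i.e. when $x$ is a minimal point --- which is exactly the $\ToM$ hypothesis that made Theorem 3.3 work and is unavailable here: a weakly mixing system can have points that are neither transitive nor minimal, and statement 2 is claimed for all $x$. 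The paper closes precisely this gap with a different ingredient, quoted from \cite{AK}: for every $x$ there exists a point $z$ such that for every neighborhood $G$ of $z$ and all opene $U,V$ there are infinitely many $n$ with $f^n(x)\in G$ and $f^n(U)\cap V\ne\varnothing$. Taking $z$ as the point at which $\mathbb{L}_r-\delta$ is realized and running a nested construction $U_0\supset U_1\supset\cdots$ inside $U_x$, a point $u\in\bigcap_i\overline{U_i}$ satisfies $\limsup_n d(f^n(x),f^n(u))\ge\mathbb{L}_r-3\delta$. Without some such synchronization statement for the fixed orbit of $x$, your part 2 does not go through.

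A smaller problem occurs in your part 3: transitivity of $f\times f$ cannot deliver ``a time $n$ with $f^n(x)$ near $x$ and $f^n(y)$ near $q$ for some $y\in U_x$,'' because product transitivity only controls some point near $x$, not the orbit of the specific point $x$. Since minimality is assumed there, the correct (and the paper's) argument is combinatorial: $n_f(x,V_x)$ is syndetic because $x$ is a minimal point, $n_f(U_x,V_y)$ is thick because the system is weakly mixing, and a syndetic set meets every thick set; this yields $\mathbb{L}_r\ge\diam(X)-\delta$. That step is easily repaired, whereas the gap in part 2 concerns the general, non-minimal statement and requires the additional tool from \cite{AK} (or an equivalent) that your proposal does not supply.
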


\begin{proof} 1. Since a  weakly mixing system is topologically
transitive, from Theorem 3.1 we have
$\mathbb{L}_{d}=\overline{\mathbb{L}}_{d}$.
Since $(X,f)$ be a topologically weakly mixing, also the direct product $(X\times X,f\times f)$ is topologically transitive. So, in every open set in the product, in particular, in the Cartesian square
of every ball $U$ in $X$, there is a transitive point of $(X\times X,f\times f)$, i.e., a pair of points $x,y \in U$. Such pair visits all places in the Cartesian square $X\times X$ infinitely many times. It
means that $\limsup_{n \to \infty}\ d(f^n(x),f^n(y))= \diam(X)=\overline{\mathbb{L}}_{d}$.


2. Let $x\in X$. Since $(X,f)$ is  weakly mixing,
there is a point $z \in X$, such that for any  neighborhood
$G$ of $z$ and any opene $U,V$ in $X$ there exist infinitely many positive
integers $n$, for which $f^n(x) \in G$ and
$f^n(U) \cap V \ne \varnothing$ (\cite{AK}).

By the definition of $\mathbb{L}_{r}$, for the point $z$ and any
(small enough) positive $\delta$ there is a point $y\in X$ and a
positive integer $k$ such that
$d(f^k(y),f^k(z))>\mathbb{L}_{r}-\delta$.

Now, let $U_x$ be a neighborhood of point $x$, let $G_z$ and $V_y$
be open balls of radius $\delta$ centered at points $f^k(z)$ and
$f^k(y)$, respectively. Suppose also $G_z \cap V_y = \varnothing$.
In order to prove the second part of this theorem we will find a point in $U_x$
by using the above property from \cite{AK}. Let $n_0$ be a positive
integer such that $f^{n_0}(x) \in G_z$ and
$f^{n_0}(U_x) \cap V_y \ne \varnothing$. Put
$U_0:= U_x \cap f^{-n_0}(V_y)$. Obviously, $U_0$ is an opene subset of
$U_x$, $\overline{U_0} \subset \overline{U_x}$ and $x \not \in U_0$.
Define inductively opene sets $U_{1},U_{2},...$
and positive integers $n_{i}$ as follows. Let $n_k$ be a (large enough)
positive integer, say $n_k \ge k$, such that $f^{n_k}(x) \in G_z$ and
$f^{n_k}(U_{k-1}) \cap V_y \ne \varnothing$. Define
$U_k:= U_{k-1} \cap f^{-n_k}(V_y)$. It is clear that $U_k$ is an
opene subset in $X$ and $U_i \subset U_{i-1}$ for any $i \ge 1$. Hence
$\overline{U_0} \supset \overline{U_1} \supset \overline{U_2} \supset ...$
If $u$ is a point of the nonempty intersection $\cap_{i} \overline{U_i}$,
then for any natural $i$ we have $f^{n_i}(u) \in \overline V_y$ and
$f^{n_i}(x) \in G_z$. Therefore,
$\limsup_{n \to \infty} d(f^n(x),f^n(u)) \ge \mathbb{L}_{r}-3\delta$.
Since $\delta>0$ is arbitrary, we have
$\mathbb{L}_{r}=\overline{\mathbb{L}}_{r}$.

3. Cases 1 and 2 imply that it is sufficient to prove
$\mathbb{L}_{r}=\diam(X)$. Let $x \in X$ and let $U_x$ be a
neighborhood of $x$. There are two opene (infinite) sets $V_x$ and $V_y$ in
$X$ and a positive (small enough) number $\delta$ such that the
distance between $V_x$ and $V_y$ is large or equal to
$\diam(X)-\delta$.

As we have mentioned before, since TDS $(X,f)$ is minimal, any
point of $X$ is uniformly recurrent. In particular, it means that
$n_f(x,V_x)$ is a syndetic subset of $\mathbb{Z}_{+}$. On the
other hand $(X,f)$ is also a topologically weakly mixing dynamical
system. And again it means that $n_f(U,V_y)$ is a thick subset of
$\mathbb{Z}_{+}$. Hence $n_f(x,V_x) \cap n_f(U,V_y) \ne
\varnothing$ and therefore there exist a point $y \in U$ and a
positive integer $k \in n_f(x,V_x) \cap n_f(U,V_y)$ such that
$f^k(x) \in V_x$ and $f^k(y) \in V_y$. So, $d(f^k(x), f^k(y)) \ge
\diam(X)-\delta$. Since $\delta>0$ was arbitrary, we get
$\mathbb{L}_{r}=\diam(X)$.
\end{proof}

\section {Concluding remarks}

Firstly, let us remark that there are topologically weakly mixing (even topologically mixing) systems for which $\mathbb{L}_{r}=\overline{\mathbb{L}}_{r}=\diam(X)/2$. For instance, the continuous interval map $g:[0,1]\to [0,1]$, where $g(x)=3((x-1/3)-|x-1/3|+|x-2/3|)$, is topologically mixing, one of its fixed points is $1/2$, therefore clearly that $\mathbb{L}_{r}=1/2$.

Also (as we will see in Proposition 5.1 below) there are  dynamical systems for which $\mathbb{L}_{r}=2\overline{\mathbb{L}}_{r}$, but it is still an open question for topologically transitive maps (non-minimal by Theorem 3.2).

Two more open questions:
\begin{itemize}
\item[1.] Does there exist a non-transitive  dynamical system $(X,f)$ for which $\mathbb{L}_{d} > \overline{\mathbb{L}}_{d}$ and/or $\mathbb{L}_{r} > \overline{\mathbb{L}}_{d}$ ?
\item[2.] Does there exist a minimal  dynamical system $(X,f)$ for which  $\mathbb{L}_{d} > \mathbb{L}_{r}$ ?
\end{itemize}

\begin{prop} There exists a topological dynamical system $(X,f)$  for which $\mathbb{L}_{r}=2\overline{\mathbb{L}}_{r}$.
\end{prop}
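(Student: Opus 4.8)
The plan is to construct an explicit example in which the ``one-sided'' sensitivity measured by $\mathbb{L}_r$ (which asks only for some point $y$ in a neighborhood $U_x$ that eventually separates from $x$ by more than $\varepsilon$) is exactly twice the ``limsup'' version $\overline{\mathbb{L}}_r$. The most natural source of such a gap is a system whose sensitivity is entirely driven by a single isolated attracting/repelling phenomenon: we want every orbit to be asymptotically ``pulled toward'' one point while transiting arbitrarily far away en route. A clean candidate is a subshift-type or interval-type system with one globally attracting fixed point $p$ such that every point $x$ has an orbit that comes arbitrarily close to $p$, but near any $x$ there is a point $y$ whose forward orbit visits a far region before converging. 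Concretely, I would take $X \subset \{0,1\}^{\mathbb{Z}_+}$ or a subset of an interval, engineered so that $d(f^n(x),f^n(y)) \to 0$ for all pairs (so $\limsup_{n\to\infty} d(f^n x, f^n y)$ is small), while the \emph{supremum} over $n$ of this distance can be made close to $\diam(X)$, and crucially the ratio works out to exactly $2$.

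The key steps, in order, are: (1) specify the space $X$ and map $f$ precisely; the first author's own concluding remark about the map $g(x)=3((x-1/3)-|x-1/3|+|x-2/3|)$ having a fixed point at $1/2$ with $\mathbb{L}_r = 1/2 = \diam(X)/2$ suggests the companion example here should exploit a fixed point sitting at an \emph{endpoint} (so that $\overline{\mathbb{L}}_r$ collapses because orbits converge to that endpoint, while $\mathbb{L}_r$ sees the full diameter transiently). (2) Prove $\overline{\mathbb{L}}_r = c$ for the relevant constant $c$: show that for every $x$ and every neighborhood $U_x$, the best one can do with $\limsup_{n\to\infty} d(f^n x, f^n y)$ is exactly $c$ — the lower bound by exhibiting a suitable $y\in U_x$, the upper bound by showing all forward orbits eventually cluster so that no pair stays $> c$ apart in the limit. (3) Prove $\mathbb{L}_r = 2c$: for every $x$, $U_x$, find $y \in U_x$ and $n$ with $d(f^n x, f^n y)$ close to $2c$ (using that $y$ can be steered through a far region), and conversely show no $\varepsilon > 2c$ works. (4) Conclude $\mathbb{L}_r = 2c = 2\overline{\mathbb{L}}_r$.

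The main obstacle I anticipate is the \emph{upper bound for $\mathbb{L}_r$} combined with the \emph{lower bound for} $\mathbb{L}_r$ holding \emph{uniformly over all base points $x$}: the definition of $\mathbb{L}_r$ quantifies over \emph{every} $x$, so the example must be homogeneous enough that no point $x$ is ``too good'' (all of its nearby companions converging quickly, which would force $\mathbb{L}_r$ down) — in particular, even the attracting fixed point $p$ itself must have nearby points whose orbits first travel far before returning. Simultaneously one needs that no orbit ever separates by more than $2c$ at any time, which constrains the geometry tightly. Balancing these — making the transient excursion reach exactly $2c$ from \emph{any} starting point while the limit behavior is pinned at $c$ — is where the construction has to be done carefully; a piecewise-linear interval map with an attracting endpoint fixed point, a refl/expansion structure in the interior, and diameter normalized to $4c$ should be tunable to hit the ratio exactly. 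I would also double-check that the system is genuinely sensitive (so that the notation $\mathbb{L}_r$, defined via the sensitivity condition, is non-degenerate) and that $X$ has no isolated points, to keep the example consistent with the framework set up in Sections 1--3.
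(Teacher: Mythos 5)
There is a genuine gap: you never actually produce a system $(X,f)$. Your text is a plan (``specify the space and map precisely'', ``should be tunable to hit the ratio exactly'') that defers precisely the part that constitutes the proof of an existence statement, so nothing is verified. Worse, the mechanism you propose is self-defeating in its stated form: if every orbit is asymptotically pulled toward a single (endpoint) fixed point $p$, then for every pair $x,y$ one has $\limsup_{n\to\infty} d(f^n(x),f^n(y))\le \limsup_n\bigl(d(f^n(x),p)+d(p,f^n(y))\bigr)=0$, so by Proposition 1.1 the system is not sensitive and $\mathbb{L}_{r}=\overline{\mathbb{L}}_{r}=0$; the equality becomes degenerate rather than the intended nontrivial example. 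To get a positive ratio of exactly $2$ you need the orbits to accumulate not on the fixed point but on a set lying at a fixed positive distance $c$ from it, and simultaneously you need every other base point to satisfy the same limsup bound $c$ while admitting transient separation $2c$. On an interval this is very hard to arrange (the set of points at distance exactly $c$ from $p$ consists of at most two points, and a transitive attractor of diameter comparable to $2c$ would push the limsup from $p$ above $c$), and your outline gives no way around this.

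The paper resolves exactly this difficulty with a two-dimensional, rotationally symmetric construction: $X$ is the graph in $\mathbb{R}^3$ of the bump $h(r)=8r(1-r)$ over the unit disk, and $f(r,\varphi,h(r))=(g(r),2\varphi,h(g(r)))$ with $g(x)=2x-x^2$, so every nonzero radius is pushed to $1$ while the angle doubles. All orbits converge to the boundary circle $\{r=1\}$, which is at distance exactly $1$ from the central fixed point $(0,0,0)$, giving $\overline{\mathbb{L}}_{r}\le 1$; on the way out, orbits cross the ridge $r=\tfrac12$ at height $2$, which lies at distance greater than $2$ from the fixed point, and the angle doubling separates nearby points by nearly $2$ for every other base point, giving $\mathbb{L}_{r}\ge 2$; Proposition 2.1 ($\mathbb{L}_{r}\le 2\overline{\mathbb{L}}_{r}$) then forces $\mathbb{L}_{r}=2\overline{\mathbb{L}}_{r}$. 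The key idea missing from your proposal is this use of an attractor equidistant from the distinguished fixed point (bought by circular symmetry), together with the painless upper bound via Proposition 2.1, which spares one the delicate direct estimate $\mathbb{L}_{r}\le 2c$ that you list as a step.
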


\begin{proof} We define the space $X$  as a compact surface in
$\mathbb R^3$ which is homeomorphic to a two-dimensional disk in
$\mathbb R^2$. More precisely, the cylindric coordinates of a
point $(x,y,z)\in X$ have the form $(r,\varphi,z)$, where
$r=\sqrt{x^2+y^2}$ and $\varphi$ is an angle, for which
$x=r\cos\varphi$ and $y=r\sin\varphi$. In other words,
$(r,\varphi)$ are the polar coordinates of $(x,y)$, and $z$
remains unchanged. Let $h(r)=8r(1-r)$. Now, define $X$ as a set of
points with cylindric coordinates $(r,\varphi,h(r))$, where $0 \le
r \le 1, \varphi \in \mathbb R$, and  let the Euclidian metric (in
$\mathbb R^3$) $d$ be the metric on $X$.

Now we define a continuous map $f$ from $X$ to itself as follows
$f: (r,\varphi,h(r))\to (g(r),2\varphi,h(g(r)))$, where $g(x)$ is a
continuous map $[0,1] \to [0,1]$ with $g(0)=0$, $g(1)=1$ and $g(x)>x$
for all $x \in (0,1)$. From this properties one can easily deduce
that $\lim_{n \to \infty} g^n(x)=1$ for any $x \in (0,1]$. For example,
let $g(x)=2x-x^2$.

 Let $p \in X$ and $U$ be
a neighborhood of $p$. If $p \ne (0,0,0)$, then for any $\delta>0$
there are $n \in \mathbb N$ and $q \in U$ such that
$d(f^n(p),f^n(q))>2-\delta$. If $p=(0,0,0)$, then there are $n \in
\mathbb N$ and $q \in U$, for which $f^n(q)$ lies on a
circumference of $X$ with the center $(0,0,2)$ (in $\mathbb R^3$)
and the radius $\frac{1}{2}$. For these $n$ and $q$ we have
$d(f^n(p),f^n(q))>2$ and so $\mathbb{L}_{r} \ge 2$.

Now, let $p=(0,0,0)$. The equality $\lim_{n \to \infty}
d(f^n(p),f^n(q))=1$ holds for any $q \ne p$. So
$\overline{\mathbb{L}}_{r} \le 1$. Since $\mathbb{L}_{r} \le
2\overline{\mathbb{L}}_{r}$ (by Proposition 2.1), it gives
$\mathbb{L}_{r}=2\overline{\mathbb{L}}_{r}$.
\end{proof}

The idea of introducing and studying the Lyapunov numbers is derived from the following:

1. If some practical assumption holds for the behavior of a particular system, for example, a physical object, we need to know how far we can go wrong in calculations, if we mean to predict the evolution of the system over a quite long term. Only knowing that there could exist  errors in the calculations of the future behavior of a system is not that useful, since from the practical point of view, the existence of errors in calculations of almost all natural systems (as a result of inaccurate initial data) is a well-known fact. So, quantitative analysis of sensitivity that determines to what extent one's calculations are accurate is of great interest. Comparison of different Lyapunov numbers (the ones which are determined by the upper limit and the ones without limit) demonstrates that errors in calculations cannot disappear (decrease) during passing of time. That is, we cannot expect that, for example, after $10000$ or $1000000$ steps the accuracy of our prediction increase significantly (which seems commonsensical).

2. According to the Auslander theorem, one of the most important
theorems in topological dynamics, any proximal cell (i.e.,
$\Prox_f(x):=\{y\in X: \liminf_{n \to \infty} d(f^n(x),f^n(y))=0\}$)
contains a minimal point \cite{Au}. This implies, in particular, that
a distal point is always minimal. It should be noted that, if $(X,f)$
is a weak mixing dynamical system then for every $x \in X$, the
proximal cell $\Prox_f(x)$ is dense in $X$ \cite{AK}. What about this
property for the sensitive topologically transitive systems, in
particular, for the Devaney systems (i.e., topologically transitive
with a dense set of periodic points systems)? There is a direct connection between this question and the following one: When does $\mathbb{L}_{r}=\overline{\mathbb{L}}_{r}$
hold for a sensitive topologically transitive system?

\end{document}